\pgfplotsset{compat=newest}
\renewcommand*{\@cite}[2]{\fcolorbox{black}{white}{#1\if@tempswa, #2\fi}}
\renewcommand*{\@biblabel}[1]{{\fcolorbox{green}{white}{#1}}\hfill}
\numberwithin{figure}{section}
\theoremstyle{plain}
\newtheorem{thm}{Theorem}[section]
\newtheorem{lem}[thm]{Lemma}
\theoremstyle{definition}
\numberwithin{equation}{section}
\title{On the Image of the $p$-adic Logarithm on Annuli of Principal Units}
\author{Mabud Ali Sarkar}
\address{Darjeeling Hills University, Department of Mathematics, Darjeeling-734313, India}
\email{mabudji@gmail.com}
\date{}
\subjclass[2020]{11F85,~11S15,~11R18,~11Y40}
\keywords{$p$-adic numbers, $p$-adic logarithm, cyclotomic extension, principal units}
\begin{document}

\maketitle

\begin{abstract}
Let $K$ be a finite extension of $\mathbb{Q}_p$, and let $\mathfrak{m}_K$ be its maximal ideal. 
	 The image of the group of principal units $1+\mathfrak{m}_K$ under $p$-adic logarithm plays important role in several areas of number theory. In general, when the ramification index of $K/\mathbb{Q}_p$ is greater or equal to $p-1$, the precise description of this image is not known. For the cyclotomic extension $K=\mathbb{Q}_p(\zeta_p)$ of degree $p-1$, it was previously proved in \cite{MAS} that the image of the annulus region $(1+\mathfrak{m}_K) \setminus (1+\mathfrak{m}_K^2)$ by $p$-adic logarithm is exactly $\mathfrak{m}_K^2$.
     In this paper, we give a self-contained analytic proof  of this result based on explicit $p$-adic logarithmic expansions.
\end{abstract}
\maketitle
\section{Introduction and Motivation}\label{s1}
The $p$-adic logarithm plays a fundamental role in number theory, including in Iwasawa theory, following the pioneering works of Iwasawa~(\cite{KI1}, \cite{KI}). Knowledge of the image of the 
$p$-adic logarithm on principal units is particularly useful in explicit arithmetic computations, such as the computation of normalized $p$-adic regulators (see \cite[Theorem~2.22]{MAS}).

Let $K$ be a finite extension of $\mathbb{Q}_p$ with ring of integers $\mathcal{O}_K$ and maximal ideal $\mathfrak{m}_K$. The group of principal units $1+\mathfrak{m}_K$ is a pro-$p$ group, and the $p$-adic logarithm 
\[\log_p(1+x)=x-\frac{x^2}{2}+\frac{x^3}{3}-\cdots\]
converges for all $x \in \mathfrak{m}_K$. It is well known that the restriction of $\log_p$ induces an isomorphism
\begin{align} \label{eq1}
    \log_p: 1+\mathfrak{m}_K^r \xlongrightarrow{\simeq} \mathfrak{m}^r
\end{align}
whenever $r>\frac{e}{p-1}$, where $e$ denotes the ramification index of $K/\mathbb{Q}_p$. However, the structure of the image $\log_p(1+\mathfrak{m}_K)$ is not understood in general when the ramification index is large. In particular, $\log_p: 1+\mathfrak{m}_K \to \mathfrak{m}_K$ is not an isomorphism. 

In the special case where $K=\mathbb{Q}_p(\zeta_p)$ is the cyclotomic extension generated by a primitive $p$-th root of unity, the following was proved in earlier work \cite{MAS}:
\[\log_p\left((1+\mathfrak{m}_K) \setminus (1+\mathfrak{m}_K^2) \right)=\mathfrak{m}_K^2.\]
The aim of the present paper is to give an independent analytic proof of it, relying only on explicit $\pi$-adic expansions, following some applications.

\section{Main Result}
\begin{thm}\label{t2.1}
Let $p \geq 3 $ be a prime and consider the cyclotomic extension $K=\mathbb{Q}_p(\zeta_p)$ of $\mathbb{Q}_p$, where $\zeta_p$ is a primitive $p$th root of unity, and $\mathfrak{m}_K$ be the maximal ideal. Then the image of $(1+\mathfrak{m}_K) \setminus (1+\mathfrak{m}_K^2)$ under $p$-adic logarithm is $\mathfrak{m}_K^2$.
\end{thm}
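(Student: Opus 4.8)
The plan is to prove the two inclusions $\log_p\big((1+\mathfrak m_K)\setminus(1+\mathfrak m_K^2)\big)\subseteq\mathfrak m_K^2$ and $\supseteq$ separately. Throughout I write $\pi=\zeta_p-1$, a uniformizer of $K$; the extension $K/\mathbb Q_p$ is totally ramified of degree $e=p-1$ with residue field $\mathbb F_p$, the normalized valuation $v_\pi$ has $v_\pi(\pi)=1$ and $v_\pi(p)=p-1$, and since $\mathfrak m_K=(\pi)$ every element of the annulus is of the shape $1+a\pi$ with $a\in\mathcal O_K^\times$, and conversely.

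For the inclusion $\subseteq$, I would substitute $1+a\pi$ into the defining series and control the $\pi$-adic valuation of the $n$-th term, $v_\pi\!\big(\tfrac{(a\pi)^n}{n}\big)=n-(p-1)v_p(n)$. Writing $n=p^k m$ with $p\nmid m$, this equals $p^k m-(p-1)k\ge g(k):=p^k-(p-1)k$, and since $g(0)=g(1)=1$ while $g(k+1)-g(k)=(p-1)(p^k-1)>0$ for $k\ge1$, one checks that the $n$-th term lies in $\mathfrak m_K^2$ for every $n\notin\{1,p\}$. Hence $\log_p(1+a\pi)\equiv a\pi+\tfrac{(a\pi)^p}{p}\pmod{\mathfrak m_K^2}$, using $(-1)^{p-1}=1$. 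I would then determine $\tfrac{\pi^{p-1}}{p}\bmod\mathfrak m_K$ from the factorization $p=\prod_{i=1}^{p-1}(1-\zeta_p^i)=\pi^{p-1}\prod_{i=1}^{p-1}(1+\zeta_p+\dots+\zeta_p^{i-1})$: reducing modulo $\mathfrak m_K$ and using $1+\zeta_p+\dots+\zeta_p^{i-1}\equiv i$ together with Wilson's theorem $(p-1)!\equiv-1\pmod p$ gives $\tfrac{\pi^{p-1}}{p}\equiv-1\pmod{\mathfrak m_K}$, whence $\tfrac{(a\pi)^p}{p}\equiv-a^p\pi\pmod{\mathfrak m_K^2}$ and so $\log_p(1+a\pi)\equiv(a-a^p)\pi\pmod{\mathfrak m_K^2}$. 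Since $\overline a^{\,p}=\overline a$ in $\mathbb F_p$, we have $v_\pi(a-a^p)\ge1$, so $\log_p(1+a\pi)\in\mathfrak m_K^2$. (The same bookkeeping shows $\log_p(1+\mathfrak m_K^2)\subseteq\mathfrak m_K^2$ as well, hence in fact $\log_p(1+\mathfrak m_K)\subseteq\mathfrak m_K^2$.)

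For the inclusion $\supseteq$, I would use that $\log_p$ is a homomorphism $(1+\mathfrak m_K,\cdot)\to(\mathfrak m_K,+)$ and that it annihilates $p$-power roots of unity: from $p\log_p(\zeta_p)=\log_p(\zeta_p^p)=0$ we get $\log_p(\zeta_p)=0$, and note $\zeta_p=1+\pi$ lies in the annulus. Given $y\in\mathfrak m_K^2$, apply \eqref{eq1} with $r=2$ (legitimate since $\tfrac{e}{p-1}=1<2$) to obtain $w\in1+\mathfrak m_K^2$ with $\log_p(w)=y$. Then $\zeta_p w$ again lies in the annulus --- indeed $\zeta_p w-1=\zeta_p(w-1)+(\zeta_p-1)$ has $\pi$-adic valuation $1$ --- and $\log_p(\zeta_p w)=\log_p(\zeta_p)+\log_p(w)=y$. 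This gives $\mathfrak m_K^2\subseteq\log_p\big((1+\mathfrak m_K)\setminus(1+\mathfrak m_K^2)\big)$, and combined with the first inclusion completes the proof.

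The step I expect to be the main obstacle is the precise cancellation in the $\subseteq$ part: valuation estimates alone only reveal that the $n=1$ and $n=p$ terms both have $\pi$-adic valuation $1$, and one must pin down their leading coefficients modulo $\mathfrak m_K$ and verify they cancel --- which is exactly where the explicit factorization of $p$ in $\mathbb Z[\zeta_p]$ (equivalently Wilson's theorem) is required. A secondary subtlety is that $\log_p$ is genuinely non-injective on the annulus, as it kills $\zeta_p=1+\pi$, so one cannot simply invert the series termwise in the $\supseteq$ direction; the multiplicative twist by $\zeta_p$ is what makes that argument go through. If one wishes to avoid citing \eqref{eq1}, its use can be replaced by constructing $\exp_p(x)=\sum_{n\ge0}x^n/n!$ and checking, via $v_\pi(n!)=n-s_p(n)$ with $s_p$ the base-$p$ digit sum, that it converges on $\mathfrak m_K^2$ and is a two-sided inverse of $\log_p$ there.
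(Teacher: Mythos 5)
Your proposal is correct, and while the containment direction runs parallel to the paper's, the surjectivity direction takes a genuinely different route. For $\log_p\bigl((1+\mathfrak{m}_K)\setminus(1+\mathfrak{m}_K^2)\bigr)\subseteq\mathfrak{m}_K^2$, both arguments rest on the same cancellation between the $n=1$ and $n=p$ terms of the series: the paper works with Dwork's uniformizer satisfying $\pi^{p-1}=-p$, so that $\pi^p/p=-\pi$ holds exactly and the leading coefficient becomes $a_1-a_1^p\equiv 0\pmod p$, whereas you take $\pi=\zeta_p-1$ and pin down $\pi^{p-1}/p\equiv-1\pmod{\mathfrak{m}_K}$ via the factorization $p=\prod_{i=1}^{p-1}(1-\zeta_p^i)$ and Wilson's theorem, then use $\bar a^{\,p}=\bar a$ in the residue field; this is equivalent bookkeeping and both are sound. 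The real divergence is in the inclusion $\supseteq$: the paper constructs a preimage digit by digit, first solving $a_2-\tfrac{a_1^2}{2}\equiv y_2\pmod p$ (which also yields its claimed $(p-1)$-to-one fibre count) and then determining the higher digits by the Hensel-style induction of Lemma~\ref{A1}, while you solve $\log_p(w)=y$ with $w\in 1+\mathfrak{m}_K^2$ using the standard isomorphism \eqref{eq1} (legitimate here since $r=2>e/(p-1)=1$, and not circular, as \eqref{eq1} is independent of Theorem~\ref{t2.1} and is also invoked by the paper in Theorem~\ref{c211}), and then translate into the annulus by multiplying by $\zeta_p$, using additivity of $\log_p$ on $1+\mathfrak{m}_K$ together with $\log_p(\zeta_p)=0$ and $v_\pi(\zeta_p w-1)=1$. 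Your twist-by-$\zeta_p$ argument is shorter and more conceptual, and it also correctly isolates why termwise inversion cannot work (the kernel contains $\zeta_p$); what it gives up relative to the paper is self-containedness in the sense announced in the introduction (everything by explicit $\pi$-adic expansions) and the explicit description of the preimages, including the multiplicity $p-1$, which the paper's digit construction exhibits directly.
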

	\begin{proof}
    We normalise the $p$-adic valuation $v$ by $v(p)=1$. Given $K=\mathbb{Q}_p(\zeta_p)$ with $\zeta_p^p=1$. It is well known that $K/\mathbb{Q}_p$ is totally ramified of degree $p-1$. By Dwork \cite{BD}, we may choose the uniformizer $\pi$ of $K$ satisfying 
    \[\pi^{p-1}=-p.\]

	 The ring of integers of $K$ is $\mathcal{O}_K=\mathbb{Z}_p[\pi]=\mathbb{Z}_p[\zeta_p]$. So the unique maximal ideal is $\mathfrak{m}_K=\pi \mathbb{Z}_p[\zeta_p]$. Let
            \[a=1+a_1\pi+a_2 \pi^2+\cdots, \cdots a_i \in \{0, 1, \cdots, p-1\}  \]
            by the Hensel expansion of the number $a \in (1+\mathfrak{m}_K)$,
            \[ a \in (1+\mathfrak{m}_K) \setminus (1+\mathfrak{m}_K^2) \Leftrightarrow a_1 \neq 0. \]
      We have
      \begin{align*}
          \log_p(a)&=\log_p(1+a_1\pi+a_2 \pi^2+\cdots) \\
          &=(a_1 \pi+a_2 \pi^2+\beta_1 \pi^3)-(\frac{a_1^2}{2} \pi^2+\beta_2 \pi^3)+\beta_3 \pi^3-\frac{a_1^p\pi^p+p \pi^{p+1}\beta_4}{p}+\beta_5 \pi^3
      \end{align*}
      where $v(\beta_i) \geq 0,~i=1,2,3,4,5$. Using the relation $\frac{\pi^p}{p}=-\pi$ of the uniformizer, and associating the first term with the $p$-th term together, we get
      \begin{align*}
          \log_p(a)&=(a_1-a_1^p) \pi +(a_2-\frac{a_1^2}{2})\pi^2+\beta_6 \pi^3
      \end{align*}
        where $v(\beta_6) \geq 0$.

        Because $a_1 \in \{0,1,2, \cdots, p-1\}$, we have $a_1-a_1^p=p \beta_7$ with $v(\beta_7) \geq 0$. Hence using $a_1-a_1^p=p \beta_7=-\pi^{p-1} \beta_7$,
        \begin{align*}
          \log_p(a)&=(a_2-\frac{a_1^2}{2})\pi^2+\beta_8 \pi^3
      \end{align*}
      for prime $p \geq 3$, where $v(\beta_8) \geq 0$.

      Thus the image of $(1+\mathfrak{m}_K)$ by $p$-adic logarithm is contained in $\mathfrak{m}_K^2$ and the image of $(1+\mathfrak{m}_K) \setminus (1+\mathfrak{m}_K^2)$ is also contained in $\mathfrak{m}_K^2$.

      We will prove now that the $p$-adic logarithm is an application of $(1+\mathfrak{m}_K) \setminus (1+\mathfrak{m}_K^2)$ onto $\mathfrak{m}_K^2$ which is a $p-1$ to 1. Take $y \in \mathfrak{m}_K^2$, we will build a (actually $p-1$)) preimage $a(y) \in (1+\mathfrak{m}_K) \setminus (1+\mathfrak{m}_K^2)$ such that $\log_p(a(y))=y$. Let $y=y_2 \pi^2+y_3 \pi^3+\cdots$
      by Hensel expansion of $y \in \mathfrak{m}_K^2$. Write
      \[y_2 \equiv a_2-\frac{a_1^2}{2} \pmod{p}, \, a_i \in \{1, \cdots, p-1 \}  \]
      which is always possible, e.g., choose $a_2$ such that $-2y_2+2a_2$ is a non-zero quadratic residue $\pmod{p}$, in this regard we note there are $\frac{p-1}{2}$ possible choices because there are $\frac{p-1}{2}$ non-zero quadratic residue modulo $p$, \cite{GH}. Once we have chosen for $a_2$ such that $-2y_2+2a_2$ is quadratic residue modulo $p$ there are two choices for $a_1 \neq 0$ namely $a_1$ or $-a_1$. So on the whole there are $p-1$ choices for the $(a_1, a_2) \in \{1, \cdots, p-1 \} \times \{0,1, \cdots, p-1\}$.

      Then choose $a_3 \in \{0,1, \cdots, p-1\}$ in such a way that
      \[ y_2 \pi^2+y_3 \pi^3 \equiv \log_p(1+a_1 \pi+a_2 \pi^2+a_3 \pi^3) \pmod{\mathfrak{m}_K^4},  \]
      this is possible because 
      \[\log_p(1+a_1 \pi+a_2 \pi^2+a_3 \pi^3)=\log_p(1+a_1 \pi+a_2 \pi^2)+ \log_p \left(1+\frac{a_3 \pi^3}{1+a_1 \pi+a_2 \pi^2} \right) \]
      By construction
      \[\log_p(1+a_1 \pi+a_2 \pi^2) \equiv y_2 \pi^2+z_3 \pi^3 \pmod{\mathfrak{m}_K^4},~v(z_3) \geq 0, \]
      where $z_3$ is the coefficient of $\pi^3$ in the second logarithm expansion of the right-hand-side.
      Hence we get
      \[y_3 \pi^3 \equiv z_3 \pi^3+a_3 \pi^3 \pmod{\mathfrak{m}_K^4} \]
      which determines $a_3 \pmod{\pi}$ and hence $\pmod{p}$ because $a_3 \in \{0,1, \cdots, p-1 \}$. By induction, the coefficients $a_i$ can be determined successively from the data $\{y_2,\dots,y_i\}$ and the previously chosen coefficients $\{a_1,\dots,a_{i-1}\}$. For a detailed and self-contained inductive determination of the coefficients, see Lemma~\ref{A1} in the Appendix.

      Thus we have proved that the image of $(1+\mathfrak{m}_K) \setminus (1+\mathfrak{m}_K^2)$ under $p$-adic logarithm is $\mathfrak{m}_K^2$ and is onto. That is,
      \[\log\left( (1+\mathfrak{m}_K) \setminus (1+\mathfrak{m}_K^2)  \right)=\mathfrak{m}_K^2. \]
	\end{proof}

\section{Applications}
\begin{thm} \label{c211}
    The index of the pro-$p$ group $\log_p(1+\mathfrak{m}_K)$ in $\mathfrak{m}_K$ is $p$, for $K=\mathbb{Q}_p(\zeta_p)$.
\end{thm}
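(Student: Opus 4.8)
The plan is to pin down the subgroup $\log_p(1+\mathfrak{m}_K)$ of $(\mathfrak{m}_K,+)$ exactly and then read off its index by an elementary residue-field count. Concretely, I would show that $\log_p(1+\mathfrak{m}_K)=\mathfrak{m}_K^2$, and then observe that $[\mathfrak{m}_K:\mathfrak{m}_K^2]=p$.

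For the first and main step, I would decompose $1+\mathfrak{m}_K$ as the disjoint union of the subgroup $1+\mathfrak{m}_K^2$ and the annulus $(1+\mathfrak{m}_K)\setminus(1+\mathfrak{m}_K^2)$. On the annulus, Theorem~\ref{t2.1} gives exactly $\log_p\bigl((1+\mathfrak{m}_K)\setminus(1+\mathfrak{m}_K^2)\bigr)=\mathfrak{m}_K^2$, whence $\mathfrak{m}_K^2\subseteq\log_p(1+\mathfrak{m}_K)$; and the containment established in the course of the proof of Theorem~\ref{t2.1} gives $\log_p(1+\mathfrak{m}_K)\subseteq\mathfrak{m}_K^2$. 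As a cross-check on the complementary piece, here $e=p-1$, so the isomorphism \eqref{eq1} applies with $r=2>\tfrac{e}{p-1}=1$ and yields $\log_p(1+\mathfrak{m}_K^2)=\mathfrak{m}_K^2$. Together these force $\log_p(1+\mathfrak{m}_K)=\mathfrak{m}_K^2$, which is in particular a subgroup of $(\mathfrak{m}_K,+)$ since $\log_p$ is an additive homomorphism on $1+\mathfrak{m}_K$.

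For the index count, I would use that $K/\mathbb{Q}_p$ is totally ramified of degree $p-1$, so the residue field $\mathcal{O}_K/\mathfrak{m}_K$ is $\mathbb{F}_p$, and multiplication by the uniformizer $\pi$ induces an isomorphism $\mathcal{O}_K/\mathfrak{m}_K\cong\mathfrak{m}_K/\mathfrak{m}_K^2$; hence $[\mathfrak{m}_K:\mathfrak{m}_K^2]=p$. Combining this with the previous step, the index of $\log_p(1+\mathfrak{m}_K)$ in $\mathfrak{m}_K$ equals $p$, as claimed.

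I do not anticipate a real obstacle: the statement is essentially a corollary of Theorem~\ref{t2.1}. The one point deserving care is that one genuinely needs the \emph{surjectivity} half of Theorem~\ref{t2.1} — the explicit inductive construction of preimages — and not merely the containment $\log_p(1+\mathfrak{m}_K)\subseteq\mathfrak{m}_K^2$; with the containment alone one could only conclude that the index is some positive power of $p$, rather than exactly $p$.
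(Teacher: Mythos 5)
Your proof is correct and follows essentially the same route as the paper: combine Theorem~\ref{t2.1} with the isomorphism \eqref{eq1} applied to $1+\mathfrak{m}_K^2$ to get $\log_p(1+\mathfrak{m}_K)=\mathfrak{m}_K^2$, then count $[\mathfrak{m}_K:\mathfrak{m}_K^2]=p$ via $\mathfrak{m}_K/\mathfrak{m}_K^2\cong\mathcal{O}_K/\mathfrak{m}_K\cong\mathbb{F}_p$. One small quibble with your closing remark: since \eqref{eq1} already yields $\mathfrak{m}_K^2=\log_p(1+\mathfrak{m}_K^2)\subseteq\log_p(1+\mathfrak{m}_K)$, the surjectivity half of Theorem~\ref{t2.1} is not in fact needed here; the containment $\log_p(1+\mathfrak{m}_K)\subseteq\mathfrak{m}_K^2$ together with \eqref{eq1} suffices.
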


\begin{proof}
    Note that by Theorem \ref{t2.1}
    \[ \log_p\left((1+\mathfrak{m}_K) \setminus (1+\mathfrak{m}_K^2) \right)=\mathfrak{m}_K^2. \]
    Also note that $\log(1+\mathfrak{m}_K^2)=\mathfrak{m}_K^2$, by \eqref{eq1}. 
Combining these two, we obtain
    \[ \log_p(1+\mathfrak{m}_K)=\mathfrak{m}_K^2. \]
    Thus the problem reduces to compute the index $[\mathfrak{m}_K: \mathfrak{m}_K^2]$, as we have the following filtration
    \[\mathfrak{m}_K \supset \mathfrak{m}_K^2 \supset \mathfrak{m}_K^3 \supset \cdots\]
    Whence
    \[\mathfrak{m}_K/\mathfrak{m}_K^2 \cong \mathcal{O}_K/\mathfrak{m}_K \cong \mathbb{F}_p, \]
    the residue field with $p$ elements. So $[\mathfrak{m}_K : \mathfrak{m}_K^2]=p$. The claim follows.
\end{proof}

\subsection{Application to $p$-adic regulators}

Let $F$ be a number field and $p$ a prime. The $p$-adic regulator of $F$ is defined as the covolume of the image of $\mathcal{O}_F^{\times}$ under the $p$-adic logarithm embdedding
\[ \log_p: \mathcal{O}_F^{\times} \longrightarrow \prod_{v \mid p} F_v, \]
where for each place $v \mid p$ the logarithm is normalized by
\[ \log_p: 1+\mathfrak{m}_{F_v} \longrightarrow \mathfrak{m}_{F_v}. \]
Consequently, the regulator depends on the image of the $p$-adic logarithm on principal units, and in particular, on the image of this image inside the maximal ideal. 

For a fixed place $v \mid p$, the map $\log_p$ is a homomorphism of $\mathbb{Z}_p$-modules, but in general it is not surjective. Its image is a finite index submodule of $\mathfrak{m}_{F_v}$, and its index
\[[\mathfrak{m}_{F_v}: \log_p(1+\mathfrak{m}_{F_v})]\]
measures the local distortion of volume induced by the logarithmic embedding. This index contributes directly to the normalization of the local factor in the determinant defining the $p$-adic regulator.

Assume now that for some place $v \mid p$, we have an isomorphism 
\[F_v \cong \mathbb{Q}_p(\zeta_p).\]
By Theorem \ref{c211}, we have the index
\[[\mathfrak{m}_{F_v}: \log_p(1+\mathfrak{m}_{F_v})]=p\]

This explicit description shows that, in the cyclotomic case, the local contribution to the $p$-adic regulator is completely controlled and introduces no additional ambiguity beyond the residue field size. Such local computations play an important
role in the study of normalized $p$-adic regulators.

\subsection*{Acknowledgment} The author thanks Professor Daniel Barsky for several helpful discussions.

\appendix
\section{Inductive determination of coefficients}
\begin{lem} \label{A1}
Let $K=\mathbb{Q}_p(\zeta_p)$ with uniformizer $\pi$ satisfying $\pi^{p-1}=-p$, and let
\[
y=\sum_{n\ge 2} y_n \pi^n \in \mathfrak m_K^2 .
\]
There exists an element
\[
a=1+\sum_{n\ge 1} a_n \pi^n \in (1+\mathfrak m_K)\setminus(1+\mathfrak m_K^2),
\qquad a_n\in\{0,1,\dots,p-1\},
\]
such that
\[
\log_p(a)=y .
\]
Moreover, once $a_1\neq 0$ is fixed, the coefficients $a_n$ can be determined inductively.
\end{lem}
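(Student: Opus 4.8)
The plan is to realise $a$ as the limit of its $\pi$-adic truncations $\alpha_n:=1+\sum_{k=1}^{n}a_k\pi^k$, choosing the digits $a_k\in\{0,1,\dots,p-1\}$ recursively so that
\[
\log_p(\alpha_n)\equiv y\pmod{\mathfrak m_K^{n+1}}\qquad(n\ge 1)
\]
holds at every stage; the free parameter is $a_1$, which we fix to be nonzero, and all later digits are then forced. The engine of the recursion is one estimate on the tail of the logarithmic series: \emph{if $w\in\mathfrak m_K^{m}$ with $m\ge 2$, then $\log_p(1+w)\equiv w\pmod{\mathfrak m_K^{m+1}}$}. This follows at once from $v_\pi\bigl(w^k/k\bigr)\ge km-(p-1)v_p(k)$ (normalising $v_\pi(\pi)=v_p(p)=1$): for each $k\ge 2$ the right side exceeds $m$, the tight case being $k=p$, where $v_\pi(w^p/p)\ge pm-(p-1)\ge m+1$ precisely because $v_\pi(p)=p-1$ and $m\ge 2$.

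I would begin with the base case $n=1$. Since $\alpha_1=1+a_1\pi\in(1+\mathfrak m_K)\setminus(1+\mathfrak m_K^2)$, the computation in the proof of Theorem~\ref{t2.1} already gives $\log_p(\alpha_1)\in\mathfrak m_K^2$: the two contributions of $\pi$-valuation $1$ in the series for $\log_p(1+a_1\pi)$, coming from $k=1$ and from $k=p$, collapse under $\pi^p/p=-\pi$ into $(a_1-a_1^{p})\pi$, which lies in $\mathfrak m_K^{p}\subseteq\mathfrak m_K^2$ by Fermat's little theorem. As $y\in\mathfrak m_K^2$ as well, $\log_p(\alpha_1)\equiv y\pmod{\mathfrak m_K^2}$ regardless of the value of $a_1$, so we may fix any $a_1\in\{1,\dots,p-1\}$.

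For the inductive step, assume $\alpha_{n-1}$ has been constructed with $\log_p(\alpha_{n-1})\equiv y\pmod{\mathfrak m_K^{n}}$, $n\ge 2$. Writing $\alpha_n=\alpha_{n-1}\bigl(1+\tfrac{a_n\pi^n}{\alpha_{n-1}}\bigr)$, using additivity of $\log_p$, and applying the tail estimate to $w=\tfrac{a_n\pi^n}{\alpha_{n-1}}\in\mathfrak m_K^{n}$ (which satisfies $w\equiv a_n\pi^n\pmod{\mathfrak m_K^{n+1}}$), one obtains
\[
\log_p(\alpha_n)\equiv\log_p(\alpha_{n-1})+a_n\pi^n\pmod{\mathfrak m_K^{n+1}}.
\]
Because $\log_p(\alpha_{n-1})-y\in\mathfrak m_K^{n}$ and $\mathfrak m_K^{n}/\mathfrak m_K^{n+1}\cong\mathbb F_p$ is generated by $\pi^n$, there is a unique $r_n\bmod p$ with $\log_p(\alpha_{n-1})-y\equiv r_n\pi^n\pmod{\mathfrak m_K^{n+1}}$; taking $a_n\in\{0,\dots,p-1\}$ with $a_n\equiv -r_n\pmod p$ gives $\log_p(\alpha_n)\equiv y\pmod{\mathfrak m_K^{n+1}}$. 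For $n=2$ this is exactly the congruence $y_2\equiv a_2-\tfrac{a_1^2}{2}\pmod p$ already used in the proof of Theorem~\ref{t2.1} (with $2$ invertible mod $p$ since $p\ge 3$). Since at every stage $a_n$ is determined modulo $p$, hence uniquely in $\{0,\dots,p-1\}$, the digit sequence is completely pinned down once $a_1$ is chosen — this is the asserted inductive determination.

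Finally, the truncations are Cauchy, so $a:=1+\sum_{n\ge 1}a_n\pi^n\in\mathcal O_K$ exists and lies in $(1+\mathfrak m_K)\setminus(1+\mathfrak m_K^2)$ because $a_1\ne 0$. For every $n$ we have $a/\alpha_n\in 1+\mathfrak m_K^{n+1}$, hence $\log_p(a)-\log_p(\alpha_n)=\log_p(a/\alpha_n)\in\mathfrak m_K^{n+1}$ by \eqref{eq1}, so $\log_p(a)\equiv y\pmod{\mathfrak m_K^{n+1}}$ for all $n$, i.e.\ $\log_p(a)=y$. The one genuinely delicate point is the base case: the cancellation that drops the linear term of $\log_p(1+a_1\pi)$ into $\mathfrak m_K^{p}$ is exactly where the special geometry $e=p-1$, $\pi^{p-1}=-p$ of $K=\mathbb Q_p(\zeta_p)$ is used, and it is what makes a unit digit $a_1\ne 0$ admissible; after that the argument is a routine Hensel-style successive approximation, with the uniform tail estimate doing the bookkeeping.
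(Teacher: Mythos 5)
Your proposal is correct and takes essentially the same route as the paper's own proof: a digit-by-digit successive approximation in which one writes $\alpha_n=\alpha_{n-1}\bigl(1+\tfrac{a_n\pi^n}{\alpha_{n-1}}\bigr)$, uses $\log_p(1+w)\equiv w\pmod{\mathfrak m_K^{n+1}}$ to reduce each step to a linear congruence in $\mathbb F_p$, and passes to the limit via \eqref{eq1}. The only differences are organizational — you justify the tail estimate explicitly by the valuation bound $v_\pi(w^k/k)\ge km-(p-1)v_p(k)$ (which the paper leaves implicit) and fold the determination of $a_2$ into the general inductive step rather than treating it as a separate base case.
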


\begin{proof}
Let us write
\[
x=\sum_{n\ge 1} a_n \pi^n \in \mathfrak m_K ,
\qquad a=1+x .
\]
Since $v(x)>0$, the $p$-adic logarithm admits the convergent expansion
\[
\log_p(1+x)=x-\frac{x^2}{2}+\frac{x^3}{3}-\cdots .
\]

Modulo $\pi^3$, we computes
\[
\log_p(1+a_1\pi+a_2\pi^2)
\equiv a_1\pi+\left(a_2-\frac{a_1^2}{2}\right)\pi^2
\pmod{\pi^3}.
\]
As shown in the proof of Theorem~\ref{t2.1}, the coefficient of $\pi$ lies in
$\mathfrak m_K^2$, hence the condition
\[
\log_p(a)\equiv y_2\pi^2 \pmod{\pi^3}
\]
reduces to
\[
a_2-\frac{a_1^2}{2}\equiv y_2 \pmod{p}.
\]
Given $a_1 \neq 0$, this congruence admits solutions, and for each such choice the
coefficient $a_2$ is uniquely determined modulo $p$.

Assume that for some $N\ge 2$ the coefficients
$a_1,\dots,a_N$ have been chosen so that
\[
\log_p\!\left(1+\sum_{n=1}^N a_n\pi^n\right)
\equiv \sum_{n=2}^N y_n\pi^n
\pmod{\pi^{N+1}}.
\]
Set
\[
a^{(N+1)}=1+\sum_{n=1}^N a_n\pi^n+a_{N+1}\pi^{N+1}.
\]
Then
\[
a^{(N+1)}=a^{(N)}\left(1+u\right),
\qquad
u=\frac{a_{N+1}\pi^{N+1}}{a^{(N)}},
\]
with $v(u)=N+1$. Using the additivity of $\log_p$ on sufficiently small elements,
we obtain
\[
\log_p(a^{(N+1)})
\equiv \log_p(a^{(N)}) + a_{N+1}\pi^{N+1}
\pmod{\pi^{N+2}}.
\]
Hence
\[
\log_p(a^{(N+1)})
\equiv \sum_{n=2}^N y_n\pi^n
+ \left(c_{N+1}+a_{N+1}\right)\pi^{N+1}
\pmod{\pi^{N+2}},
\]
where $c_{N+1}\in\mathcal O_K$ is the error term, depends only on $a_1,\dots,a_N$.
The congruence
\[
c_{N+1}+a_{N+1}\equiv y_{N+1}\pmod{p}
\]
has a unique solution for $a_{N+1}\in\{0,1,\dots,p-1\}$, which completes the
inductive step.

The resulting series converges in $\mathcal O_K$, satisfies $a_1\neq 0$, and by induction $\log_p(a)=y$.
\end{proof}


\begin{thebibliography}{20}
	\bibitem{MAS} M. A. Sarkar and A. A. Shaikh, On the image of $p$-adic logarithm on the principal units, Houston Journal of Mathematics, 50 (2024), no. 3, pp. 559–591. DoI: \url{https://arxiv.org/pdf/1904.09850}
    \bibitem{KI1} K. Iwasawa, On explicit formulas for the norm residue symbol, J. Math. Soc. Japan 20,151-165, 1968.
    \bibitem{KI} K. Iwsawa, On some modules in the theory of cyclotomic fields, J. Math. Soc. Japan 16, 42-82, 1964.
    \bibitem{GH} G. H. Hardy and E. M. Wright, An introduction to the theory of numbers, Oxford at the Clarendon Press, 4th edition, 1959.
    \bibitem{BD} Bernard Dwork, Giovanni Gerotto and Francis Sullivan, An Introduction to $G$-Functions, Annals of Mathematics Studies 133, Princeton University Press (1994).
\end{thebibliography}
\end{document}